\documentclass{amsart}

\usepackage{amsmath,amssymb,amsthm,a4wide,bbm}
\setlength\parindent{0pt}

\usepackage{graphicx,tikz}

\newtheorem*{thm}{Theorem}
\newtheorem*{proposition}{Proposition}

\theoremstyle{definition}

\theoremstyle{remark}

\begin{document}

\title[]{ Generalized Designs on Graphs: \\ Sampling, Spectra, Symmetries}
\keywords{Spherical Design, Graph,  Symmetries, Quadrature Points, Sobolev-Lebedev Quadrature, Spectrum, Laplacian, Graph Laplacian, Sampling, Design, Heat Kernel, Packing.}
\subjclass[2010]{05B99, 05C50, 05C70, 35P05, 35P20, 65D32} 

\author[]{Stefan Steinerberger}
\address{Department of Mathematics, Yale University, New Haven, CT 06511, USA}
\email{stefan.steinerberger@yale.edu}

\begin{abstract} Spherical Designs are finite sets of points on the sphere $\mathbb{S}^{d}$ with the property
that the average of certain (low-degree) polynomials in these points coincides with the global average of the polynomial on $\mathbb{S}^{d}$.
They are evenly
distributed and often exhibit a great degree of regularity and symmetry. We point
out that a spectral definition of spherical designs transfers to finite graphs -- these 'graphical designs' are
subsets of vertices that are evenly spaced and capture the symmetries of the underlying graph (should they exist). Our main result states that good graphical designs either consist
of many vertices or their neighborhoods have exponential volume growth. We show several examples, describe ways to find them and discuss problems.
\end{abstract}

\maketitle

\section{Introduction}
\subsection{Spherical Designs.} Suppose $\left\{x_1, \dots, x_n\right\} \subset \mathbb{S}^2$ has the property that, for some weights $a_k$, 
$$ \frac{1}{|\mathbb{S}^2|} \int_{\mathbb{S}^2}{ f(x)dx} =  \sum_{k=1}^{n}{a_k f(x_k)} $$
for all polynomials $f$ up to a certain degree: depending on $n$, how large can the the degree of the polynomials be? A counting argument suggests that every one of the $n$ points has 2 coordinates and 1 weight: the right-hand side therefore has $3n$ degrees of freedom and one could hope to be able to integrate at least the first $\sim 3n$ low-degree polynomials exactly. This intuition was formulated by McLaren \cite{mclaren} in 1963 (see Ahrens \& Beylkin \cite{ahrens} for recent numerical experiments). 

\begin{figure}[h!]
\begin{tikzpicture}[scale=0.7]
  \tikzstyle{every node}=[circle,inner sep=0pt,minimum size=0.5cm]
    \foreach \y[count=\a] in {10,9,4}
      {\pgfmathtruncatemacro{\kn}{120*\a-90}
       \node at (\kn:3) (b\a) {\small \y};}
    \foreach \y[count=\a] in {8,7,2}
      {\pgfmathtruncatemacro{\kn}{120*\a-90}
       \node at (\kn:2.2) (d\a) {\small \y};}
    \foreach \y[count=\a] in {1,5,6}
      {\pgfmathtruncatemacro{\jn}{120*\a-30}
       \node at (\jn:1.5) (a\a) {\small \y};}
    \foreach \y[count=\a] in {3,11,12}
      {\pgfmathtruncatemacro{\jn}{120*\a-30}
       \node at (\jn:3) (c\a) {\small \y};}
  \draw[dashed] (a1)--(a2)--(a3)--(a1);
  \draw[ultra thick] (d1)--(d2)--(d3)--(d1);
  \foreach \a in {1,2,3}
   {\draw[dashed] (a\a)--(c\a);
   \draw[ultra thick] (d\a)--(b\a);}
   \draw[ultra thick] (c1)--(b1)--(c3)--(b3)--(c2)--(b2)--(c1);
   \draw[ultra thick] (c1)--(d1)--(c3)--(d3)--(c2)--(d2)--(c1);
   \draw[dashed] (b1)--(a1)--(b2)--(a2)--(b3)--(a3)--(b1);
\end{tikzpicture}
\caption{The Icosahedron has a great degree of symmetry. It integrates all polynomials on $\mathbb{S}^2$ up to degree 5 exactly, this vector space is 36-dimensional.}
\end{figure}
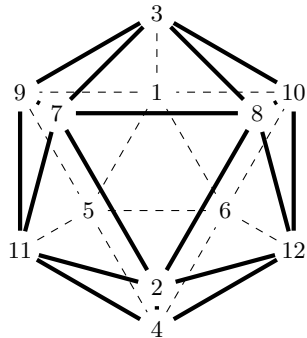

McLaren also described 72 points on $\mathbb{S}^2$ integrating $225 > 216 = 3 \cdot 72$ polynomials exactly:
\begin{quote}
Symmetries enable quadrature rules to exceed the linear algebra heuristic. 
\end{quote}

Symmetries of the underlying geometry translate into symmetries of the underlying smooth functions which can then be exploited to break the restriction imposed by linear algebra. However, this is a rare phenomenon and few examples are known.

\subsection{Graphs.} The purpose of this paper is to point out that this idea, demanding exact integration of relatively smooth objects to define well-distributed subsets of the geometry that reflect the underlying symmetries, can be easily and explicitly studied on finite Graphs $G=(V,E)$. Furthermore, all computational aspects boil down to linear algebra which is fairly accessible (possibly even easier than handling sets of points on $\mathbb{S}^2$ in the continuous case).
We will now formally define the problem. Polynomials on $\mathbb{S}^{d}$ are merely the eigenfunctions of the Laplace operator $-\Delta_{\mathbb{S}^{d}}$. This suggests making use of the eigenvectors of the discrete Laplace matrix 
$$ L = A  D^{-1}- I_{n \times n} \qquad \mbox{or, equivalenty,} \qquad (Lf)(u) =\sum_{v \sim_{E} u}{ \left( \frac{f(v)}{\mbox{deg}(v)}  - \frac{f(u)}{\mbox{deg}(u)}\right)}$$
where $\mbox{Id}_{n \times n}$ is the Identity matrix, $A$ is the adjacency matrix, $D$ is the diagonal degree matrix ($D_{ii}$ being the degree of the vertex $i$) and the summation runs over all neighbors $v$ of $u$.
 There are other  Laplacians, most notably the normalized Laplacian $L = \mbox{Id}_{n \times n} - D^{-1/2}AD^{-1/2}$ (see \cite{chung}). These are certainly also of great interest; we emphasize that most Graphs arising as explicit examples in this paper are regular: for regular Graphs both notions of a Laplacian agree.

\begin{center}
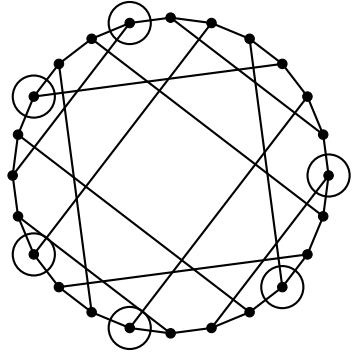
\begin{figure}[h!]
  \begin{tikzpicture}[scale=0.7]
\foreach \a in {1,2,...,24}{
\filldraw (\a*360/24: 3cm) circle (0.09cm);
};
\foreach \a in {1,2,...,24}{
\draw [thick] (\a*360/24: 3cm) --  (\a*360/24 + 360/24: 3cm);
};
\draw [thick] (1*360/24: 3cm) -- (6*360/24: 3cm);
\draw [thick] (2*360/24: 3cm) -- (17*360/24: 3cm);
\draw [thick] (3*360/24: 3cm) -- (10*360/24: 3cm);
\draw [thick] (4*360/24: 3cm) -- (21*360/24: 3cm);
\draw [thick] (5*360/24: 3cm) -- (14*360/24: 3cm);
\draw [thick] (7*360/24: 3cm) -- (12*360/24: 3cm);
\draw [thick] (8*360/24: 3cm) -- (23*360/24: 3cm);
\draw [thick] (9*360/24: 3cm) -- (16*360/24: 3cm);
\draw [thick] (11*360/24: 3cm) -- (20*360/24: 3cm);
\draw [thick] (13*360/24: 3cm) -- (18*360/24: 3cm);
\draw [thick] (15*360/24: 3cm) -- (22*360/24: 3cm);
\draw [thick] (19*360/24: 3cm) -- (24*360/24: 3cm);
\draw [thick] (7*360/24: 3cm) circle (0.4cm);
\draw [thick] (10*360/24: 3cm) circle (0.4cm);
\draw[thick] (14*360/24: 3cm) circle (0.4cm);
\draw [thick] (17*360/24: 3cm) circle (0.4cm);
\draw[thick] (21*360/24: 3cm) circle (0.4cm);
\draw [thick] (24*360/24: 3cm) circle (0.4cm);
   \end{tikzpicture}
\caption{The Nauru Graph on 24 vertices: a subset of 6 vertices integrates the first 19 eigenfunctions exactly. We observe that every other vertex is \textit{exactly} distance 1 away from exactly one of the 6 vertices.}
\end{figure}
\end{center}
\vspace{-10pt}
The operator $AD^{-1}$ corresponds to an averaging operator implying $\sigma(AD^{-1}) \subset [-1,1]$ and
$$ \sigma(L) \subset [-2,0].$$
Eigenvalues of $L$ with value close to $-1$ correspond to eigenfunctions that decay quickly under diffusion and should therefore be understood as the high-frequency objects. This motivates an ordering of eigenvalues from
low frequency to high frequency
$$ | \lambda_1 + 1| \geq |\lambda_2 +1| \geq |\lambda_3+1| \geq \dots \geq |\lambda_n+1|$$
We denote the associated eigenvectors by $\phi_1, \phi_2, \dots, \phi_n$ -- we will interpret these as functions $\phi_j:V \rightarrow \mathbb{R}$ and
write $\phi_j(v)$ to denote the value of the $j-$th eigenfunction at the vertex $v \in V$.

\subsection{The Problem.} The problem can now be formally posed as follows: among all subsets $W \subset V$ of a certain size $|W|$, is there one integrating many eigenfunctions exactly?\\

\begin{quote}
\textbf{Problem} (Graphical Design)\textbf{.} Let $G= (V,E)$ be a finite, simple, connected graph. Suppose there exists a subset $W \subset V$ with weights $a_w$ such that
$$ \forall ~1 \leq k \leq K: \qquad \sum_{w \in W}{ a_w \phi_k(w)} = \frac{1}{|V|} \sum_{v \in V}{  \phi_k(v)}$$
How big can $K$ be (depending on $|W|$)? How does this depend on
$G$? How would one find sets $W$ having $K$ large?\\
\end{quote}
We observe that there is some ambiguity when eigenspaces have a large multiplicity -- this will not be important throughout the paper (all numerical results are with respect to \textit{some} admissible ordering
of the eigenvectors). We will also use the terms eigenvector and eigenfunction interchangeably.
Linear Algebra suggests that for a generic set $W$ of vertices one would expect the existence of weights $a_w$ such that the first $\sim |W|$ eigenfunctions are integrated exactly.
All numerical examples in this paper have constant weights $a_w \equiv 1/|W|$. Generically, we do not expect any subset $W \subsetneq V$ with equal weights 
to integrate even the first nontrivial eigenfunction exactly: nontrivial graphical designs with equal weights are only possible in the presence of additional structure.\\

There is a natural motivation for this question that carries over from the continuous setting: if we are given a graph $G=(V,E)$ with many vertices $|V| \gg 1$ and a function
$f: V \rightarrow \mathbb{R}$ that is 'smooth' with respect to the geometry of $G$, then graphical designs are a natural place to sample to get a decent approximation for
the average value of $f$: they cancel low-frequency oscillations to the best of their ability. In the classical setting of $\mathbb{S}^d$, this idea can be found at least as early as 1962 in a paper of Sobolev \cite{sobolev} where it is discussed for the sphere and spherical harmonics. Lebedev et al. \cite{leb1, leb5} gave explicit constructions on $\mathbb{S}^2$ and this idea is now sometimes known as Sobolev-Lebedev quadrature. We can think of graphical designs as 'spherical designs on graphs' or 'Sobolev-Lebedev quadrature rules on graphs'.

\begin{center}
\begin{figure}[h!]
  \begin{tikzpicture}[scale=0.7]
\foreach \a in {1,2,...,9}{
\filldraw (\a*360/9: 3cm) circle (0.08cm);
};
\foreach \a in {1,2,...,9}{
\draw [thick] (\a*360/9: 3cm) --  (\a*360/9 + 360/9: 3cm);
};
\filldraw [thick] (0.8, -0.4) circle (0.08cm);
\filldraw [thick] (-0.8, -0.4) circle (0.08cm);
\filldraw [thick] (0, 1) circle (0.08cm);
\draw [thick] (0.8, -0.4) --  (8*360/9: 3cm);
\draw [thick] (0.8, -0.4) -- (-0.8, -0.4) -- (0,1) -- (0.8, -0.4);
\draw [thick] (-0.8, -0.4) --  (5*360/9: 3cm);
\draw [thick] (0,1) --  (2*360/9: 3cm);
\draw [thick] (1*360/9: 3cm) --  (3*360/9: 3cm);
\draw [thick] (4*360/9: 3cm) --  (6*360/9: 3cm);
\draw [thick] (7*360/9: 3cm) --  (9*360/9: 3cm);
\draw [ thick]  (1*360/9: 3cm) circle (0.4cm);
\draw [ thick]  (9*360/9: 3cm) circle (0.4cm);
\draw [ thick]  (-0.8, -0.4) circle (0.4cm);
\draw [ thick]  (5*360/9: 3cm) circle (0.4cm);
   \end{tikzpicture}
\caption{The Truncated Tetrahedral Graph on 12 vertices: a subset of 4 vertices integrates the first 11 eigenfunctions exactly. Every other vertex is exactly distance 1 away from exactly one of the 4 vertices.}
\end{figure}
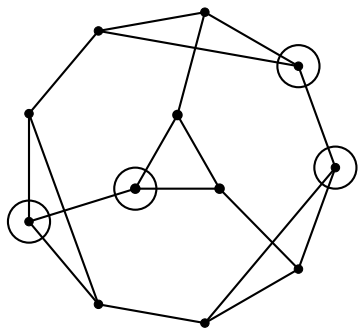
\end{center}

\vspace{-20pt}

\subsection{Related Literature.} The systematic study of spherical $t-$designs was started by Delsarte, Goethals \& Seidel \cite{delsarte} (based on some earlier ideas of Delsarte \cite{del} on $t-$designs in 
$Q-$polynomial association schemes).
 It is impossible to summarize the field, we mention
the seminal papers by Bondarenko, Radchenko \& Viazovska \cite{bond} and Yudin \cite{yudin} and refer to a recent survey of Brauchart \& Grabner \cite{brau}. A Lemma
of Montgomery \cite{mont} (see also \cite{bilyk, neu, steinriesz}) may be understood as the study of the analogue of spherical designs on $\mathbb{T}^d$ where polynomials are replaced by trigonometric polynomials  -- this result does not seem to be very well known in this community since the relevant statement appears as a Lemma and is used for a very different purpose. 
To the best of our knowledge,
the first upper bound for weighted spherical designs on Riemannian manifolds is due to the author \cite{stein}; the eigenfunctions of the Laplacian
on $\mathbb{S}^2$ are polynomials, the classical setting is therefore included as a special case. We briefly remark that there is also the study of combinatorial designs (see e.g. \cite{ass}): these are families of a subsets with highly structured intersection patterns. Seidel \cite{seidel}
refers to spherical designs as their 'Euclidean counterpart'. We also emphasize that the notion of $t-$design in Delsarte's seminal paper \cite{del} has rather striking implications for distance-regular graphs and strong ties to combinatorial designs: Delsarte himself proved that $t-$designs in Johnson Graphs and the Hamming graphs are the combinatorial block $t-$designs and the orthogonal arrays of strength $t$, respectively; we refer to the books of Bannai \& Ito \cite{ban} and Brouwer, Cohen \& Neumaier \cite{bro2} for more details.

\section{The Result}

We now state the main result: if there exists a set $W$ that integrates eigenfunctions up to a certain eigenvalue exactly, then either 
$|W|$ is large or there is exponential growth of neighborhoods.

\begin{thm}[]  Let $V=(G,E)$ be a finite simple graph such that $L =A D^{-1} - \emph{Id}_{n \times n}$ has an orthogonal set of eigenvectors.
Let $W \subset V$ be a subset equipped with positive weights normalized such that
$$ \sum_{w \in W}{ a_w \phi_k(w)} = \frac{1}{|V|} \sum_{v \in V}{  \phi_k(v)}$$
for all eigenvectors $\phi_k$ whose eigenvalue satisfies $| \lambda_k + 1| \geq \lambda$. Then, for every $k \in \mathbb{N}$,
$$  \# \left\{ x \in V: d(x,W) \leq k\right\} \geq  \frac{1}{2} \min \left\{  \frac{1}{\lambda^{2k}}, |V| \right\}.$$
Moreover, if all weights are identical, $|a_w| = 1/|W|$, then we have 
$$  \# \left\{ x \in V: d(x,W) \leq k\right\} \geq  \frac{1}{2} \min \left\{  \frac{|W|}{\lambda^{2k}}, |V| \right\}.$$
\end{thm}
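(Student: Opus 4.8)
The plan is to run a diffusion argument: spread the weighted point mass on $W$ by the column‑stochastic averaging operator $M = AD^{-1}$ and play a spectral upper bound against a geometric lower bound on how far the result is from uniform. Encode $W$ as the vector $a \in \mathbb{R}^V$ with $a(w) = a_w$ for $w \in W$ and $a(v) = 0$ otherwise, and let $\nu(v) = 1/|V|$ denote the uniform distribution. Since $\frac{1}{|V|}\sum_{v} \phi_k(v) = \langle \nu, \phi_k\rangle$, the design hypothesis reads $\langle a, \phi_k\rangle = \langle \nu, \phi_k\rangle$ for every mode with $|\lambda_k + 1| \ge \lambda$; equivalently, in the orthonormal eigenbasis the difference $a - \nu$ is spectrally supported on the high‑frequency modes $|\lambda_k+1| < \lambda$. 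Testing the constant eigenvector also forces $\sum_w a_w = 1$, so that $a$ is a genuine probability vector.

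First I would prove the upper bound. Because $M\phi_k = (\lambda_k+1)\phi_k$ and $M\nu = \nu$, applying $M^k$ and using Parseval gives
$$ \|M^k a - \nu\|^2 = \sum_{|\lambda_j+1| < \lambda} |\lambda_j+1|^{2k}\,|\langle a, \phi_j\rangle|^2 \le \lambda^{2k}\,\|a\|^2, $$
since the only surviving Fourier coefficients sit on modes with $|\lambda_j+1| < \lambda$. Here $\|a\|^2 = \sum_w a_w^2 \le \big(\sum_w a_w\big)^2 = 1$ in general, and $\|a\|^2 = 1/|W|$ when the weights are equal.

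Next I would prove the matching lower bound, where the geometry enters. Because $M$ is column‑stochastic, $M^k a$ is again a probability vector, and because $(M^k)_{xy} = 0$ whenever $d(x,y) > k$, the vector $M^k a$ is supported on $S = \{x : d(x,W) \le k\}$. A probability vector supported on a set of size $|S|$ satisfies $\|M^k a\|^2 \ge 1/|S|$ by Cauchy–Schwarz, and since $\langle M^k a, \nu\rangle = 1/|V|$ one gets $\|M^k a - \nu\|^2 = \|M^k a\|^2 - 1/|V| \ge 1/|S| - 1/|V|$. Combining with the upper bound yields $1/|S| - 1/|V| \le \lambda^{2k}\|a\|^2$, hence $|S| \ge |V|/\big(1 + |V|\lambda^{2k}\|a\|^2\big)$. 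Inserting $\|a\|^2 \le 1$ (respectively $\|a\|^2 = 1/|W|$) and splitting according to whether the denominator exceeds $2$ produces the claimed $\tfrac12\min\{\lambda^{-2k}, |V|\}$ (respectively $\tfrac12\min\{|W|\lambda^{-2k}, |V|\}$).

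The main obstacle is conceptual rather than computational: it lies in choosing the right object to diffuse. The difference $a - \nu$ is supported on all of $V$, so tracking it spatially is hopeless, whereas $M^k a$ itself stays compactly supported in $S$ while remaining a probability measure — it is exactly this asymmetry between the compactly supported $M^k a$ and the spectrally sparse $a - \nu$ that drives everything. The remaining care is bookkeeping the constants so the two regimes of the minimum separate cleanly, and noting that the orthogonality hypothesis on the eigenvectors (which for a connected graph forces regularity) is precisely what legitimizes both the Parseval step and the identification of $\phi_1$ with the constant vector, so that $M\nu = \nu$.
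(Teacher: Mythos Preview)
Your proposal is correct and is essentially the paper's own argument: diffuse the weighted point mass by $M=AD^{-1}$, use the spectral expansion and the design hypothesis to get $\|M^k a-\nu\|^2\le\lambda^{2k}\|a\|^2$, and play this against the Cauchy--Schwarz lower bound $\|M^k a\|^2\ge 1/|S|$ coming from the fact that $M^k a$ is a probability vector supported in the $k$-neighborhood $S$. The paper derives the same inequality $1/|S|-1/|V|\le\lambda^{2k}\sum_w a_w^2$ (it even records this as the sharpest form at the end of the proof) and obtains the $\tfrac12\min$ statement by the same dichotomy you describe.
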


The theorem can be interpreted in a number of ways. The most obvious one is that if $|W|$ is small, then either the eigenfunctions that are being integrated exactly
are not deep inside the spectrum or the $k-$neighborhoods of $W$ undergo exponential growth all the way until they contain half of all vertices.
This can be seen rather drastically in various explicit examples throughout the paper.

\begin{center}
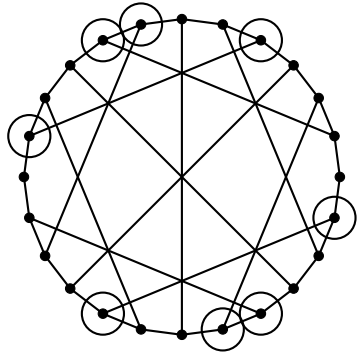
\begin{figure}[h!]
  \begin{tikzpicture}[scale=0.7]
\foreach \a in {1,2,...,24}{
\filldraw (\a*360/24: 3cm) circle (0.09cm);
};
\foreach \a in {1,2,...,24}{
\draw [thick] (\a*360/24: 3cm) --  (\a*360/24 + 360/24: 3cm);
};
\draw [thick] (1*360/24: 3cm) -- (8*360/24: 3cm);
\draw [thick] (2*360/24: 3cm) -- (19*360/24: 3cm);
\draw [thick] (3*360/24: 3cm) -- (15*360/24: 3cm);
\draw [thick] (4*360/24: 3cm) -- (11*360/24: 3cm);
\draw [thick] (5*360/24: 3cm) -- (22*360/24: 3cm);
\draw [thick] (6*360/24: 3cm) -- (18*360/24: 3cm);
\draw [thick] (7*360/24: 3cm) -- (14*360/24: 3cm);
\draw [thick] (9*360/24: 3cm) -- (21*360/24: 3cm);
\draw [thick] (10*360/24: 3cm) -- (17*360/24: 3cm);
\draw [thick] (13*360/24: 3cm) -- (20*360/24: 3cm);
\draw [thick] (16*360/24: 3cm) -- (23*360/24: 3cm);
\draw [thick] (4*360/24: 3cm) circle (0.4cm);
\draw [thick] (7*360/24: 3cm) circle (0.4cm);
\draw [thick] (8*360/24: 3cm) circle (0.4cm);
\draw[thick] (11*360/24: 3cm) circle (0.4cm);
\draw [thick] (16*360/24: 3cm) circle (0.4cm);
\draw[thick] (19*360/24: 3cm) circle (0.4cm);
\draw[thick] (20*360/24: 3cm) circle (0.4cm);
\draw [thick] (23*360/24: 3cm) circle (0.4cm);
   \end{tikzpicture}
\caption{The McGee Graph on 24 vertices: a subset of 8 vertices integrates the first 21 eigenfunctions exactly. Every other vertex is exactly distance 1 away from exactly one element of this subset.}
\end{figure}
\end{center}

\vspace{-15pt}

There are other possible applications: if the Graph is $d-$regular, then
$$   \# \left\{ x \in V: d(x,W) \leq k\right\} \leq |W| ( d + d^2 + \dots + d^k) \lesssim_d |W| \cdot d^k$$
and we see that subsets $W \subset V$ with equal weight can never be exact on all eigenvectors whose eigenvalue is  $|\lambda + 1| \lesssim d^{-1/2}$.
Another implication is as follows: if we are able to integrate all eigenvectors with eigenvalues $|\lambda_k + 1| \geq 0.99$ exactly, then either $|W| \sim |V|$ or, if $|W| \ll |V|$, then the Graph has
most of its vertices at distance $\sim \log{|V|}$ from each other. The proof of the Theorem does not seem
to have any obvious bottleneck; the result could be close to optimal. The proof immediately extends to Graphs with weighted
edges as long as the Laplacian is defined in such a way that the associated diffusion preserves the mean value of a function.

\section{Finding Graphical Designs} There is a large number of theoretical questions that would greatly benefit
form having many nice examples and the ability to quickly see whether a Graph supports a graphical
design. The
trivial algorithm is to simply go through all possible subsets of a certain cardinality and test them. However, this scales badly and
already Graphs with, say, $|V| \sim 25$ already require considerable time. Nonetheless, brute force search is a viable option when
studying 'small' Graphs. When it comes to proving the non-existence of a graphical design of a certain size and quality, we do
not know of any other method. However, the construction of explicit examples can be accelerated.\\

\textbf{A Simple Algorithm.} Good qualities as a graphical design correspond to rapid growth of neighborhoods.
This suggests that we should pick $W$ to be as spread out as possible. A fairly simple but somewhat effective algorithm is as follows.

\begin{quote} 
\textbf{Algorithm.} Pick $k$ random elements $\left\{v_1, \dots, v_k\right\}$ from $V$. Compute
$$ \mbox{total pairwise distance} = \sum_{i,j=1}^{k}{d(v_i, v_j)}.$$
Go through all elements and see whether replacing a vertex by one of its neighbors increases the sum. If so, then flip a coin
and either replace the vertex by this new vertex or not. Repeat this procedure on the new configuration and check only subsets arising in this process
for their quality as a graphical design. 
\end{quote}
The coin flip is supposed to stabilize possibly arising oscillations. The choice of the functional, i.e. the total sum of the distances,
is somewhat natural but certainly not canonical. We have found that this simple algorithm performs quite well even on somewhat 
large graphs, see Table 1. However, it is of course difficult to say whether the results obtained are even close to optimal or whether there
are much better graphical designs on these particular Graphs. 
\begin{center}
\begin{table}[h!] \label{tab}
\begin{tabular}{l  c  c  c r}
Graph &  $|V|$   &  $|E|$ & $|W|$& $\#$ eigenfunctions\\
\hline
24-Cell Graph  & 24  & 96 & 3  &  8\\
Icosidodecahedral Graph  &30  & 60 & 6  &  24\\
Cayley Graph (30,1)  & 30  & 60 & 6  &  19\\
Gewirtz Graph  & 56 & 280 & 1 & 19 \\
Gosset Graph  & 56  & 756 & 4  &  29\\
\vspace{0pt}
\end{tabular}
\caption{Graphs with $|V|$ vertices and $|E|$ edges, there exists a set of vertices $W \subset V$ such that $\#$ eigenfunctions are integrated exactly.}
\end{table}
\end{center}
\vspace{-20pt}
This approach is close in spirit to the idea that minimizing energy configuration of a finite number of particles on manifolds should be good integration
points. This intuition is rarely made precise; the author showed in \cite{steinriesz} that the minimal energy configuration of $N$ particles on the torus $\mathbb{T}^d$ under Gaussian interaction 
is an optimal set of quadratures in the space of low-degree trigonometric polynomials -- we will obtain a similar result here and discuss in \S 5.1, as a natural byproduct of the proof, an 'almost-characterization' of good graphical designs
that can be used to refine the notion of distance in the algorithm above.

\begin{center}
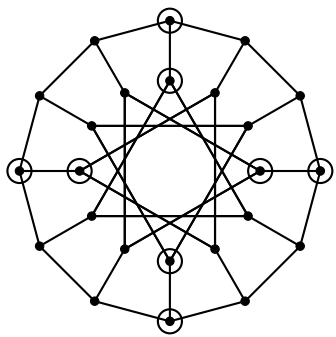
\begin{figure}[h!]
  \begin{tikzpicture}[scale=0.4]
\foreach \a in {1,2,...,12}{
\filldraw (\a*360/12: 3cm) circle (0.14cm);
};
\foreach \a in {1,2,...,12}{
\draw [thick] (\a*360/12: 3cm) --  (\a*360/12 + 4*360/12: 3cm);
\draw [thick] (\a*360/12: 3cm) --  (\a*360/12 + 8*360/12: 3cm);
\draw [thick] (\a*360/12: 3cm) --  (\a*360/12: 5cm);
};
\foreach \a in {1,2,...,12}{
\filldraw (\a*360/12: 5cm) circle (0.14cm);
};
\foreach \a in {1,2,...,12}{
\draw [thick] (\a*360/12: 5cm) --  (\a*360/12 + 360/12: 5cm);
};
\draw [thick] (3*360/12: 3cm) circle (0.4cm);
\draw [thick] (6*360/12: 3cm) circle (0.4cm);
\draw [thick] (9*360/12: 3cm) circle (0.4cm);
\draw [thick] (12*360/12: 3cm) circle (0.4cm);
\draw [thick] (15*360/12: 5cm) circle (0.4cm);
\draw [thick] (18*360/12: 5cm) circle (0.4cm);
\draw [thick] (21*360/12: 5cm) circle (0.4cm);
\draw [thick] (24*360/12: 5cm) circle (0.4cm);
   \end{tikzpicture}
\caption{Generalized Petersen Graph (12,4) on 24 vertices: a subset of 8 vertices integrates the first 22 eigenfunctions exactly: its neighborhood is all of $V$.}
\end{figure}
\end{center}

We emphasize that the efficiency of the simple algorithm may be simply due to the fact that distance-maximizing configurations tend to exploit symmetries and have a tendency
to end up in highly symmetric arrangements. However, it certainly succeeds in finding graphical designs of small cardinality in graphs with high degrees of symmetry. For an algorithm with real
theoretical justification, we refer to \S 5.1.

\section{Proof}
\subsection{Proof of the Theorem.}
\begin{proof} We assume that $G=(V,E)$ is given, abbreviate $n = |V|$ and assume that the operator $L = AD^{-1} - \mbox{Id}_{n \times n}$ has a set of orthogonal eigenvectors whose eigenvalues are indexed as
$$  1 = |\lambda_1 + 1| \geq |\lambda_2 + 1| \geq \dots \geq |\lambda_n + 1|.$$

This is merely the correct ordering in terms of absolute value of their eigenvalue w.r.t. the matrix $AD^{-1}$. 
Assume that $W \subset V$ is a set of vertices equipped with positive weights $a_w \geq 0$ such that
$$\sum_{w \in W}{a_w \phi_k(w)} = \frac{1}{|V|} \sum_{v \in V}{ \phi_k(v)}$$
for all eigenfunctions $\phi_k$ whose eigenvalue satisfies $|\lambda_k + 1| \geq \lambda > 0$. The first eigenfunction is constant, this implies the normalization
$$ \sum_{w \in W}{a_w} = 1.$$
We observe that $\mbox{Id}_{n \times n} + L = AD^{-1}$ is the transition probability for the random walk on the Graph. We also observe that it preserves the average of
a function since

$$ \sum_{v \in V}{\left[ (AD^{-1})f\right](v)} = \sum_{v \in V}{  \sum_{(w,v) \in E}{ \frac{f(w)}{\mbox{deg}(w)}}} = \sum_{v \in V}{f(v)}.$$

\begin{center}
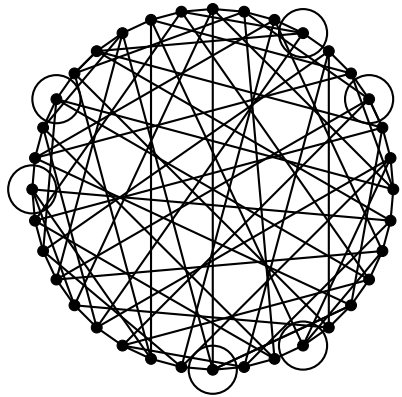
\begin{figure}[h!]
  \begin{tikzpicture}[scale=0.8]
\foreach \a in {1,2,...,36}{
\filldraw (\a*360/36: 3cm) circle (0.09cm);
};
\foreach \a in {1,2,...,36}{
\draw [thick] (\a*360/36: 3cm) --  (\a*360/36 + 360/36: 3cm);
};
\draw [thick] (1*360/36: 3cm) -- (23*360/36: 3cm);
\draw [thick] (1*360/36: 3cm) -- (26*360/36: 3cm);
\draw [thick] (1*360/36: 3cm) -- (30*360/36: 3cm);
\draw [thick] (2*360/36: 3cm) -- (3*360/36: 3cm);
\draw [thick] (2*360/36: 3cm) -- (32*360/36: 3cm);
\draw [thick] (2*360/36: 3cm) -- (13*360/36: 3cm);
\draw [thick] (2*360/36: 3cm) -- (19*360/36: 3cm);
\draw [thick] (3*360/36: 3cm) -- (8*360/36: 3cm);
\draw [thick] (3*360/36: 3cm) -- (21*360/36: 3cm);
\draw [thick] (3*360/36: 3cm) -- (24*360/36: 3cm);
\draw [thick] (4*360/36: 3cm) -- (11*360/36: 3cm);
\draw [thick] (4*360/36: 3cm) -- (17*360/36: 3cm);
\draw [thick] (4*360/36: 3cm) -- (36*360/36: 3cm);
\draw [thick] (5*360/36: 3cm) -- (20*360/36: 3cm);
\draw [thick] (5*360/36: 3cm) -- (28*360/36: 3cm);
\draw [thick] (5*360/36: 3cm) -- (31*360/36: 3cm);
\draw [thick] (6*360/36: 3cm) -- (10*360/36: 3cm);
\draw [thick] (6*360/36: 3cm) -- (13*360/36: 3cm);
\draw [thick] (6*360/36: 3cm) -- (23*360/36: 3cm);
\draw [thick] (7*360/36: 3cm) -- (17*360/36: 3cm);
\draw [thick] (7*360/36: 3cm) -- (26*360/36: 3cm);
\draw [thick] (7*360/36: 3cm) -- (34*360/36: 3cm);
\draw [thick] (8*360/36: 3cm) -- (14*360/36: 3cm);
\draw [thick] (8*360/36: 3cm) -- (29*360/36: 3cm);
\draw [thick] (9*360/36: 3cm) -- (22*360/36: 3cm);
\draw [thick] (9*360/36: 3cm) -- (27*360/36: 3cm);
\draw [thick] (9*360/36: 3cm) -- (36*360/36: 3cm);
\draw [thick] (10*360/36: 3cm) -- (19*360/36: 3cm);
\draw [thick] (10*360/36: 3cm) -- (33*360/36: 3cm);
\draw [thick] (11*360/36: 3cm) -- (25*360/36: 3cm);
\draw [thick] (11*360/36: 3cm) -- (30*360/36: 3cm);
\draw [thick] (12*360/36: 3cm) -- (16*360/36: 3cm);
\draw [thick] (12*360/36: 3cm) -- (21*360/36: 3cm);
\draw [thick] (12*360/36: 3cm) -- (27*360/36: 3cm);
\draw [thick] (13*360/36: 3cm) -- (14*360/36: 3cm);
\draw [thick] (13*360/36: 3cm) -- (35*360/36: 3cm);
\draw [thick] (14*360/36: 3cm) -- (25*360/36: 3cm);
\draw [thick] (14*360/36: 3cm) -- (31*360/36: 3cm);
\draw [thick] (15*360/36: 3cm) -- (20*360/36: 3cm);
\draw [thick] (15*360/36: 3cm) -- (33*360/36: 3cm);
\draw [thick] (15*360/36: 3cm) -- (36*360/36: 3cm);
\draw [thick] (16*360/36: 3cm) -- (23*360/36: 3cm);
\draw [thick] (16*360/36: 3cm) -- (29*360/36: 3cm);
\draw [thick] (17*360/36: 3cm) -- (32*360/36: 3cm);
\draw [thick] (18*360/36: 3cm) -- (22*360/36: 3cm);
\draw [thick] (18*360/36: 3cm) -- (25*360/36: 3cm);
\draw [thick] (18*360/36: 3cm) -- (35*360/36: 3cm);
\draw [thick] (19*360/36: 3cm) -- (29*360/36: 3cm);
\draw [thick] (20*360/36: 3cm) -- (26*360/36: 3cm);
\draw [thick] (21*360/36: 3cm) -- (34*360/36: 3cm);
\draw [thick] (22*360/36: 3cm) -- (31*360/36: 3cm);
\draw [thick] (24*360/36: 3cm) -- (28*360/36: 3cm);
\draw [thick] (24*360/36: 3cm) -- (33*360/36: 3cm);
\draw [thick] (27*360/36: 3cm) -- (32*360/36: 3cm);
\draw [thick] (28*360/36: 3cm) -- (35*360/36: 3cm);
\draw [thick] (27*360/36: 3cm) -- (32*360/36: 3cm);
\draw [thick] (30*360/36: 3cm) -- (34*360/36: 3cm);
\draw [thick] (3*360/36: 3cm) circle (0.4cm);
\draw [thick] (6*360/36: 3cm) circle (0.4cm);
\draw [thick] (15*360/36: 3cm) circle (0.4cm);
\draw [thick] (18*360/36: 3cm) circle (0.4cm);
\draw [thick] (27*360/36: 3cm) circle (0.4cm);
\draw [thick] (30*360/36: 3cm) circle (0.4cm);
   \end{tikzpicture}
\caption{The Sylvester Graph on 36 vertices: 6 vertices integrate the first 26 eigenfunctions exactly. Every vertex is connected to one of the 6 vertices.}
\end{figure}
\end{center}

The main idea of the proof is an analysis of the evolution of the function 
$$ f =  - \frac{1}{n} + \sum_{w \in W}{a_w \delta_w},$$
under iterated applications of the operator $\mbox{Id}_{n \times n} + L$. Here, $\delta_w$ is the characteristic function on $w$, i.e.
$$ \delta_w(v) = \begin{cases} 1 \qquad &\mbox{if} ~v = w \\ 0 \qquad &\mbox{otherwise.} \end{cases}$$
We derive upper and lower bounds on the $L^2-$norm of $(\mbox{Id}_{n \times n} + L)^k f$, comparing these bounds to each other then yields the desired result.
We start with an expansion of $f$ into eigenvectors of $(\mbox{Id}_{n \times n} + L)$. We note that there is an eigenvector
having all constant entries and that all other eigenvectors are orthogonal by assumption: thus all but the first eigenvector have mean value 0. Moreover, $f$ also has mean value 0 and is thus
orthogonal to the first eigenvector. Since this weighted subset integrates the first few eigenfunctions exactly, we observe that
\begin{align*}
(\mbox{Id}_{n \times n} + L)^k f &= \sum_{i=1}^{n}{(\lambda_i +1)^k \left\langle f, \phi_i \right\rangle \phi_i }   =  \sum_{i=1}^{n}{(\lambda_i +1)^k \left\langle   - \frac{1}{n} + \sum_{w \in W}{a_w \delta_w}, \phi_i \right\rangle \phi_i } \\
&=   \sum_{i=2}^{n}{(\lambda_i +1)^k \left\langle  \sum_{w \in W}{a_w \delta_w}, \phi_i \right\rangle \phi_i }    = \sum_{|\lambda_i + 1| \leq \lambda}^{}{(\lambda_i +1)^k \left\langle  \sum_{w \in W}{a_w \delta_w}, \phi_i \right\rangle \phi_i }
\end{align*}
We can now use the Pythagorean theorem to conclude that
\begin{align*}
 \| (\mbox{Id}_{n \times n} + L)^k f\|^2_{L^2} &= \sum_{|\lambda_i + 1| \leq \lambda}^{}{|\lambda_i +1|^{2k}  \left|\left\langle  \sum_{w \in W}{a_w \delta_w}, \phi_i \right\rangle\right|^2 } \\
&\leq  \lambda^{2k} \sum_{i=1}^{n}{ \left|\left\langle  \sum_{w \in W}{a_w \delta_w}, \phi_i \right\rangle\right|^2 } = \lambda^{2k} \left\|  \sum_{w \in W}{a_w \delta_w} \right\|_{L^2}^2 \\
&= \lambda^{2k} \sum_{w \in W}{a_w^2}.
\end{align*}

\begin{center}
\begin{figure}[h!]
  \begin{tikzpicture}[scale=0.6]
\foreach \a in {1,2,...,18}{
\filldraw (\a*360/18: 3cm) circle (0.10cm);
};
\foreach \a in {1,2,...,18}{
\draw [thick] (\a*360/18: 3cm) --  (\a*360/18 + 360/18: 3cm);
};
\draw [thick] (360/18: 3cm) --  (6*360/18: 3cm);
\draw [thick] (2*360/18: 3cm) --  (9*360/18: 3cm);
\draw [thick] (3*360/18: 3cm) --  (14*360/18: 3cm);
\draw [thick] (4*360/18: 3cm) --  (11*360/18: 3cm);
\draw [thick] (5*360/18: 3cm) --  (16*360/18: 3cm);
\draw [thick] (7*360/18: 3cm) --  (12*360/18: 3cm);
\draw [thick] (8*360/18: 3cm) --  (15*360/18: 3cm);
\draw [thick] (10*360/18: 3cm) --  (17*360/18: 3cm);
\draw [thick] (13*360/18: 3cm) --  (18*360/18: 3cm);
\draw [ thick]  (1*360/18: 3cm)  circle  (0.3cm);
\draw [ thick]  (6*360/18: 3cm)  circle  (0.3cm);
\draw [ thick]  (10*360/18: 3cm)  circle  (0.3cm);
\draw [ thick]  (11*360/18: 3cm)  circle  (0.3cm);
\draw [ thick]  (14*360/18: 3cm)  circle  (0.3cm);
\draw [ thick]  (15*360/18: 3cm)  circle  (0.3cm);
   \end{tikzpicture}
\caption{Pappus Graph on 18 vertices: 6 vertices integrate the first 14 eigenfunctions. The $1-$neighborhood of the set is again all of $V$.}
\end{figure}
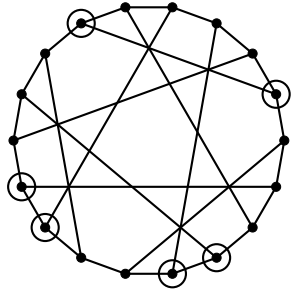
\end{center}

Clearly, since the weights are nonnegative and add up to 1, we get
$$  \| (\mbox{Id}_{n \times n} + L)^k f\|^2_{L^2}\leq \lambda^{2k}  \sum_{w \in W}{a_w^2} \leq \lambda^{2k}  \sum_{w \in W}{a_w} =  \lambda^{2k}.$$
If the weights are all equal, this bound improves to
$$  \| (\mbox{Id}_{n \times n} + L)^k f\|^2_{L^2} \leq \lambda^{2k}  \sum_{w \in W}{a_w^2}  = \lambda^{2k}  \sum_{w \in W}{\frac{1}{|W|^2}} =  \frac{\lambda^{2k}}{|W|}.$$
The remainder of the proof is devoted to deriving a lower bound for the expression.
We can use linearity and invariance of constants to conclude that
$$ (\mbox{Id}_{n \times n}  + L)^k f = -\frac{1}{n} + (\mbox{Id}_{n \times n}  + L)^k \sum_{w \in W}{a_w \delta_w}.$$
The operator $\mbox{Id}_{n \times n}  + L = AD^{-1}$ preserves the average value of a function. At the same time,
it acts as a discrete diffusion and in one time step can only transport mass to immediate neighbors. This, together with positivity of weights, implies that the support 
 $ (\mbox{Id}_{n \times n}  + L)^k f$ is given by 
$$  S_k = \left\{ v \in V:  \left[ (\mbox{Id}_{n \times n}  + L)^k \sum_{w \in W}{a_w \delta_w}\right] (v) > 0 \right\}  =  \left\{ x \in V: d(x,W) \leq k\right\}.$$
Squaring out implies
\begin{align*}
 \sum_{v \in V}{ \left( -\frac{1}{n} + (\mbox{Id}_{n \times n}  + L)^k \sum_{w \in W}{a_w \delta_w (v)}\right)^2} &=  \frac{1}{n} +  \left\|  (\mbox{Id}_{n \times n}  + L)^k \sum_{w \in W}{a_w \delta_w} \right\|_{L^2}^2 \\
&-   \sum_{v \in V}{  \left[ \frac{2}{n} (\mbox{Id}_{n \times n}  + L)^k \sum_{w \in W}{a_w \delta_w } \right](v)}
\end{align*}
However, the operator preserves integral averages and therefore
$$  \sum_{v \in V}{  \left[ \frac{2}{n} (\mbox{Id}_{n \times n}  + L)^k \sum_{w \in W}{a_w \delta_w } \right](v)} = \frac{2}{n}  \sum_{v \in V}{  \sum_{w \in W}{a_w \delta_w (v)}} = \frac{2}{n} \sum_{w \in W}{a_w} = \frac{2}{n}.$$

\vspace{-10pt}
\begin{center}
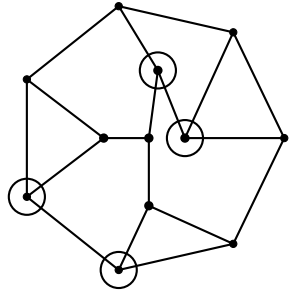
\begin{figure}[h!]
  \begin{tikzpicture}[scale=0.6]
\foreach \a in {1,2,...,7}{
\filldraw (\a*360/7: 3cm) circle (0.08cm);
};
\foreach \a in {1,2,...,7}{
\draw [thick] (\a*360/7: 3cm) --  (\a*360/7 + 360/7: 3cm);
};
\filldraw [thick] (0.8, 0) circle (0.08cm);
\draw [thick] (0.8, 0) --  (1*360/7: 3cm);
\draw [thick] (0.8, 0) --  (7*360/7: 3cm);
\draw [thick] (0.8, 0) --  (0.2, 1.5);
\filldraw [thick] (0.2, 1.5) circle (0.08cm);
\filldraw [thick] (0,0) circle (0.08cm);
\filldraw [ thick] (-1,0) circle (0.08cm);
\filldraw [ thick] (0,-1.5) circle (0.08cm);
\draw [ thick] (0.2, 1.5) --  (2*360/7: 3cm);
\draw [ thick] (0.2, 1.5) --  (0,0);
\draw [ thick] (-1, 0) --  (0,0);
\draw [ thick] (0, -1.5) --  (0,0);
\draw [ thick] (-1, 0) --  (3*360/7: 3cm);
\draw [ thick] (-1, 0) --  (4*360/7: 3cm);
\draw [ thick] (0,-1.5) --  (5*360/7: 3cm);
\draw [ thick] (0,-1.5) --  (6*360/7: 3cm);
\draw [ thick] (4*360/7: 3cm) circle (0.4cm);
\draw [ thick] (5*360/7: 3cm) circle (0.4cm);
\draw [ thick] (0.2, 1.5) circle (0.4cm);
\draw [ thick] (0.8, 0) circle (0.4cm);
   \end{tikzpicture}
\caption{The Frucht Graph on 12 vertices: a subset $W$ of 4 vertices integrates the first 11 eigenfunctions exactly. The set is again expanding optimality, the $1-$neighborhood of the set is all of $V$.}
\end{figure}
\end{center}
\vspace{-10pt}

This shows that
$$  \left\|    -\frac{1}{n} + (\mbox{Id}_{n \times n}  + L)^k \sum_{w \in W}{a_w \delta_w (v)} \right\|_{L^2}^2 = \left\|   (\mbox{Id}_{n \times n}  + L)^k \sum_{w \in W}{a_w \delta_w (v)}\right\|_{L^2}^2 - \frac{1}{n}.$$
We multiply the function with the characteristic function of its support and use the Cauchy-Schwarz inequality to argue that
\begin{align*}
 1 &=  \left\| \mathbbm{1}_{S_k} \cdot  (\mbox{Id}_{n \times n}  + L)^k \sum_{w \in W}{a_w \delta_w} \right\|^2_{L^1} \leq (\# S_k)   \left\|  (\mbox{Id}_{n \times n}  + L)^k \sum_{w \in W}{a_w \delta_w} \right\|_{L^2}^2.
\end{align*}

In the case where the $k-$neighborhood of $W$ contains more than $n/2$ vertices, there is nothing to show. If this is not the case, i.e. $\# S_k \leq n/2$, then this inequality shows that
$$  \left\|  (\mbox{Id}_{n \times n}  + L)^k \sum_{w \in W}{a_w \delta_w} \right\|_{L^2}^2 \geq \frac{2}{n}$$
and therefore
$$  \left\|    -\frac{1}{n} + (\mbox{Id}_{n \times n}  + L)^k \sum_{w \in W}{a_w \delta_w (v)} \right\|_{L^2}^2  \geq \frac{1}{2} \left\|  (\mbox{Id}_{n \times n}  + L)^k \sum_{w \in W}{a_w \delta_w} \right\|_{L^2}^2.$$

Altogether, this yields
$$   \frac{1}{2 } \frac{1}{\# \left\{ x \in V: d(x,W) \leq k\right\} } \leq   \left\|  (\mbox{Id}_{n \times n}  + L)^k f \right\|_{L^2}^2 \leq \lambda^{2k}$$
and we obtain the desired result. If all weights are $a_w = 1/|W|$, then we get an improvement by a factor of $|W|$ in the upper bound as outlined above. 
The factor $1/2$ is merely for convenience. The strongest result following from this argument is
$$ \frac{1}{\# \left\{ x \in V: d(x,W) \leq k\right\} } - \frac{1}{|V|} \leq \lambda^{2k} \sum_{w \in W}{a_w^2}.$$
\end{proof}

\section{Concluding Remarks.} 

\subsection{A Heat Kernel Packing Problem.} The proof has one immediate application that can be used in applications: it shows that in order for there to even be a chance of
having a very good graphical design, we require that
$$  \left\| - \frac{1}{n} +   (\mbox{Id}_{n \times n}  + L)^k \sum_{w \in W}{a_w \delta_w} \right\|_{L^2}^2 \qquad \mbox{is small.}$$
Conversely, \textit{if} the quantity is small, then the identity
$$ \left\| - \frac{1}{n} +  (\mbox{Id}_{n \times n}  + L)^k \sum_{w \in W}{a_w \delta_w} \right\|_{L^2}^2 = \sum_{i=2}^{n}{ |\lambda_i + 1|^{2k}  \left| \left\langle \sum_{w \in W}{a_w \delta_w} , \phi_i \right\rangle \right|^2}$$
implies that the right-hand side cannot have a lot of mass at low frequencies. Making the quantity small is thus not a guarantee of having a graphical design but at least guarantees an 'almost-graphical design': it does not necessarily integrate
eigenfunctions with $|\lambda_i + 1|$ large exactly but the error it makes cannot be large (depending on how small the term on the left-hand side is).

\begin{center}
\begin{figure}[h!]
  \begin{tikzpicture}[scale=0.8]
\foreach \a in {1,2,...,32}{
\filldraw (\a*360/32: 3cm) circle (0.09cm);
};
\foreach \a in {1,2,...,32}{
\draw [thick] (\a*360/32: 3cm) --  (\a*360/32 + 360/32: 3cm);
};
\draw [thick] (360/32: 3cm) --  (6*360/32: 3cm);
\draw [thick] (2*360/32: 3cm) --  (29*360/32: 3cm);
\draw [thick] (3*360/32: 3cm) --  (16*360/32: 3cm);
\draw [thick] (4*360/32: 3cm) --  (23*360/32: 3cm);
\draw [thick] (5*360/32: 3cm) --  (10*360/32: 3cm);
\draw [thick] (7*360/32: 3cm) --  (20*360/32: 3cm);
\draw [thick] (8*360/32: 3cm) --  (27*360/32: 3cm);
\draw [thick] (9*360/32: 3cm) --  (14*360/32: 3cm);
\draw [thick] (11*360/32: 3cm) --  (24*360/32: 3cm);
\draw [thick] (12*360/32: 3cm) --  (31*360/32: 3cm);
\draw [thick] (13*360/32: 3cm) --  (18*360/32: 3cm);
\draw [thick] (15*360/32: 3cm) --  (28*360/32: 3cm);
\draw [thick] (16*360/32: 3cm) --  (3*360/32: 3cm);
\draw [thick] (17*360/32: 3cm) --  (22*360/32: 3cm);
\draw [thick] (19*360/32: 3cm) --  (32*360/32: 3cm);
\draw [thick] (21*360/32: 3cm) --  (26*360/32: 3cm);
\draw [thick] (25*360/32: 3cm) --  (30*360/32: 3cm);
\draw [ thick]  (7*360/32: 3cm)  circle  (0.3cm);
\draw [ thick]  (10*360/32: 3cm)  circle  (0.3cm);
\draw [ thick]  (13*360/32: 3cm)  circle  (0.3cm);
\draw [ thick]  (16*360/32: 3cm)  circle  (0.3cm);
\draw [ thick]  (23*360/32: 3cm)  circle  (0.3cm);
\draw [ thick]  (26*360/32: 3cm)  circle  (0.3cm);
\draw [ thick]  (29*360/32: 3cm)  circle  (0.3cm);
\draw [ thick]  (32*360/32: 3cm)  circle  (0.3cm);

   \end{tikzpicture}
\caption{Dyck Graph on 32 vertices: 8 vertices integrate the first 16 eigenfunctions. Every vertex of $V$ is connected to exactly one of the 8 vertices.}
\end{figure}
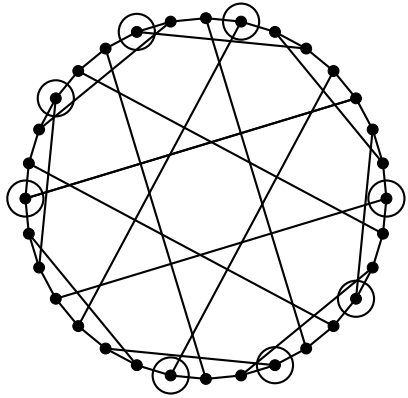
\end{center}

However, as observed in the proof,
$$  \left\| - \frac{1}{n} +   (\mbox{Id}_{n \times n}  + L)^k \sum_{w \in W}{a_w \delta_w} \right\|_{L^2}^2 = - \frac{1}{n} +   \left\|   (\mbox{Id}_{n \times n}  + L)^k \sum_{w \in W}{a_w \delta_w} \right\|_{L^2}^2$$
so it suffices to minimize that quantity. This quantity has an explicit interpretation since
$$  (\mbox{Id}_{n \times n}  + L)^k \delta_w $$
is the probability distribution of a random walk started in $w$ after $k$ units of time. This can be summarized as a guiding heuristic:
\begin{quote}
A good graphical design is very nearly characterized by the following property: if we have $a_w$ random walkers start in $w$ for all $w \in W$, then their likelihood of occupying the same vertex at time $t$ is as small as possible.
\end{quote}

Taking a Euclidean manifold and considering a fine grid-like discretization graph invokes probability distributions that
resemble Gaussians. Taking the limit suggests that the behavior in the continuous setting should be given by Brownian
motion and can be rephrased as a packing problem for heat kernels. This is exactly the heuristic derived in \cite{stein}.\\

Moreover, moving away from the notion of graphical design towards, more generally, subsets of weighted vertices that 'almost' integrate the first few eigenfunctions exactly, we
see that 
$$ \left\| - \frac{1}{n} +  (\mbox{Id}_{n \times n}  + L)^k \sum_{w \in W}{a_w \delta_w} \right\|_{L^2}^2 = \sum_{i=2}^{n}{ |\lambda_i + 1|^{2k}  \left| \left\langle \sum_{w \in W}{a_w \delta_w} , \phi_i \right\rangle \right|^2}$$
remains relevant since the right-hand side can be interpreted as a total integration error with a decaying weight on higher oscillations. This identity shows that the
suggested approach based on minimizing random walker interaction remains completely valid even on fairly irregular Graphs that may not display a lot of symmetries or structure.
It remains to be seen whether this can be used for 'numerical integration on graphs' (conceivably useful if the evaluation of $f:V \rightarrow \mathbb{R}$ is costly and it is known that $f$ is smooth w.r.t. the geometry of the graph).

\vspace{-0pt}

\begin{center}
\begin{figure}[h!]
  \begin{tikzpicture}[scale=0.8]
\foreach \a in {1,2,...,30}{
\filldraw (\a*360/30: 3cm) circle (0.09cm);
};
\draw [thick] (360/30: 3cm) --  (2*360/30: 3cm);
\draw [thick] (360/30: 3cm) --  (3*360/30: 3cm);
\draw [thick] (360/30: 3cm) --  (16*360/30: 3cm);
\draw [thick] (360/30: 3cm) --  (24*360/30: 3cm);
\draw [thick] (360/30: 3cm) --  (25*360/30: 3cm);
\draw [thick] (2*360/30: 3cm) --  (4*360/30: 3cm);
\draw [thick] (2*360/30: 3cm) --  (7*360/30: 3cm);
\draw [thick] (2*360/30: 3cm) --  (10*360/30: 3cm);
\draw [thick] (2*360/30: 3cm) --  (30*360/30: 3cm);
\draw [thick] (3*360/30: 3cm) --  (5*360/30: 3cm);
\draw [thick] (3*360/30: 3cm) --  (6*360/30: 3cm);
\draw [thick] (3*360/30: 3cm) --  (11*360/30: 3cm);
\draw [thick] (3*360/30: 3cm) --  (28*360/30: 3cm);
\draw [thick] (4*360/30: 3cm) --  (6*360/30: 3cm);
\draw [thick] (4*360/30: 3cm) --  (13*360/30: 3cm);
\draw [thick] (4*360/30: 3cm) --  (20*360/30: 3cm);
\draw [thick] (4*360/30: 3cm) --  (27*360/30: 3cm);
\draw [thick] (5*360/30: 3cm) --  (7*360/30: 3cm);
\draw [thick] (5*360/30: 3cm) --  (20*360/30: 3cm);
\draw [thick] (5*360/30: 3cm) --  (21*360/30: 3cm);
\draw [thick] (5*360/30: 3cm) --  (29*360/30: 3cm);
\draw [thick] (6*360/30: 3cm) --  (8*360/30: 3cm);
\draw [thick] (6*360/30: 3cm) --  (14*360/30: 3cm);
\draw [thick] (6*360/30: 3cm) --  (23*360/30: 3cm);
\draw [thick] (7*360/30: 3cm) --  (9*360/30: 3cm);
\draw [thick] (7*360/30: 3cm) --  (15*360/30: 3cm);
\draw [thick] (7*360/30: 3cm) --  (26*360/30: 3cm);
\draw [thick] (9*360/30: 3cm) --  (8*360/30: 3cm);
\draw [thick] (10*360/30: 3cm) --  (8*360/30: 3cm);
\draw [thick] (17*360/30: 3cm) --  (8*360/30: 3cm);
\draw [thick] (29*360/30: 3cm) --  (8*360/30: 3cm);
\draw [thick] (9*360/30: 3cm) --  (11*360/30: 3cm);
\draw [thick] (9*360/30: 3cm) --  (22*360/30: 3cm);
\draw [thick] (9*360/30: 3cm) --  (25*360/30: 3cm);
\draw [thick] (10*360/30: 3cm) --  (12*360/30: 3cm);
\draw [thick] (10*360/30: 3cm) --  (18*360/30: 3cm);
\draw [thick] (10*360/30: 3cm) --  (21*360/30: 3cm);
\draw [thick] (11*360/30: 3cm) --  (13*360/30: 3cm);
\draw [thick] (11*360/30: 3cm) --  (18*360/30: 3cm);
\draw [thick] (11*360/30: 3cm) --  (19*360/30: 3cm);
\draw [thick] (12*360/30: 3cm) --  (13*360/30: 3cm);
\draw [thick] (12*360/30: 3cm) --  (14*360/30: 3cm);
\draw [thick] (12*360/30: 3cm) --  (25*360/30: 3cm);
\draw [thick] (12*360/30: 3cm) --  (28*360/30: 3cm);
\draw [thick] (13*360/30: 3cm) --  (15*360/30: 3cm);
\draw [thick] (13*360/30: 3cm) --  (24*360/30: 3cm);
\draw [thick] (14*360/30: 3cm) --  (16*360/30: 3cm);
\draw [thick] (14*360/30: 3cm) --  (19*360/30: 3cm);
\draw [thick] (14*360/30: 3cm) --  (22*360/30: 3cm);
\draw [thick] (15*360/30: 3cm) --  (16*360/30: 3cm);
\draw [thick] (15*360/30: 3cm) --  (17*360/30: 3cm);
\draw [thick] (15*360/30: 3cm) --  (23*360/30: 3cm);
\draw [thick] (16*360/30: 3cm) --  (18*360/30: 3cm);
\draw [thick] (16*360/30: 3cm) --  (29*360/30: 3cm);
\draw [thick] (17*360/30: 3cm) --  (19*360/30: 3cm);
\draw [thick] (17*360/30: 3cm) --  (20*360/30: 3cm);
\draw [thick] (17*360/30: 3cm) --  (28*360/30: 3cm);
\draw [thick] (18*360/30: 3cm) --  (20*360/30: 3cm);
\draw [thick] (18*360/30: 3cm) --  (26*360/30: 3cm);
\draw [thick] (19*360/30: 3cm) --  (21*360/30: 3cm);
\draw [thick] (19*360/30: 3cm) --  (27*360/30: 3cm);
\draw [thick] (20*360/30: 3cm) --  (22*360/30: 3cm);
\draw [thick] (21*360/30: 3cm) --  (23*360/30: 3cm);
\draw [thick] (21*360/30: 3cm) --  (24*360/30: 3cm);
\draw [thick] (22*360/30: 3cm) --  (24*360/30: 3cm);
\draw [thick] (22*360/30: 3cm) --  (30*360/30: 3cm);
\draw [thick] (23*360/30: 3cm) --  (25*360/30: 3cm);
\draw [thick] (23*360/30: 3cm) --  (30*360/30: 3cm);
\draw [thick] (24*360/30: 3cm) --  (26*360/30: 3cm);
\draw [thick] (25*360/30: 3cm) --  (27*360/30: 3cm);
\draw [thick] (26*360/30: 3cm) --  (27*360/30: 3cm);
\draw [thick] (26*360/30: 3cm) --  (28*360/30: 3cm);
\draw [thick] (27*360/30: 3cm) --  (29*360/30: 3cm);
\draw [thick] (28*360/30: 3cm) --  (30*360/30: 3cm);
\draw [thick] (29*360/30: 3cm) --  (30*360/30: 3cm);
\draw [ thick]  (3*360/30: 3cm)  circle  (0.3cm);
\draw [ thick]  (10*360/30: 3cm)  circle  (0.3cm);
\draw [ thick]  (15*360/30: 3cm)  circle  (0.3cm);
\draw [ thick]  (22*360/30: 3cm)  circle  (0.3cm);
\draw [ thick]  (27*360/30: 3cm)  circle  (0.3cm);
   \end{tikzpicture}
\caption{The Wong Graph on 30 vertices: 5 vertices integrate the first 25 eigenfunctions. The $1-$neighborhood is all of $V$.}
\end{figure}
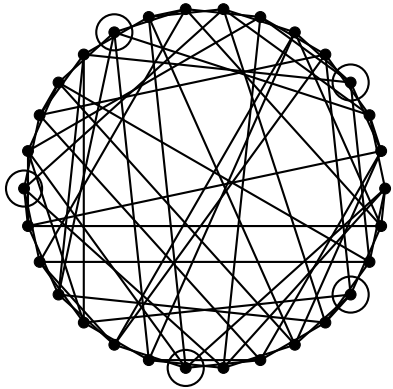
\end{center}

\vspace{-10pt}

\subsection{Open Problems.}
There is a large number of very natural problems. 
We list some.
\begin{enumerate}
\item (Other Laplacians.) Our result works for the Laplace-type operator $L = A D^{-1} - \mbox{Id}_{n \times n}$. There are other natural Laplacians associated to Graphs, see e.g. Chung \cite{chung}. There tends to be
a certain invariance for regular graphs. Regular graphs should therefore be the examples that provide the most 'Laplacian-independent' results.
\item ((Non-)Existence.) Which theoretical results can be proven? How is the existence of a graphical design with certain parameters related to the (a) number of vertices, (b) number of edges, (c) multiplicity of eigenvalues? Given the 'almost-reformulation' as a packing problem even parameters like the independence or chromatic number could play role. 
\item (Weights.) All the examples we constructed in this paper have equal weights $a_w = 1/|W|$. Does adding weights greatly increase the flexibility of graphical designs or is it rather the case that the more extreme cases, for example 4 vertices in the Gosset Graph integrating the first 29 eigenfunctions exactly, are so 'overloaded' with symmetry that allowing for non-constants weights does not yield any improvement at all? It seems natural to assume that in case where weights are effective, their numerical values will encode additional information about their role within the symmetry.

\item (Finding Examples.) Is it possible to completely characterize graphical designs in certain families? There are some families of graphs whose spectra and eigenfunctions are completely characterized (see \cite{bro}),
are some of them suited for a complete analysis? Both the Wang graph (Figure 10) and the Meringer Graph (30 vertices with a graphical design of 6 vertices integrating 25 eigenfunctions exactly) have graphical designs with astonishing degrees of efficiency; they are also both $(5,5)-$cages. The Petersen Graph is also a $(5,5)-$cage and does not seem to have any good (equal-weight) graphical designs. Are there any natural families always supporting high-quality graphical designs?

\item (Fast Computation.) Given an explicit Graph, how would one go about finding a good graphical design or deciding that none with a certain quality exists?

\item (Universal Existence of 'minimal' graphical designs.) It is tempting to conjecture that for every connected Graph (or, as a weaker conjecture, 'generically') and every $1 \leq k \leq n$ there is a set $W \subset V$ of cardinality
$|W| = k$ and weights $a_w$ (not necessarily positive) that integrates at least the first $k$ eigenfunctions exactly. It would be interesting to understand whether the weights can be 
assumed to be positive.

\end{enumerate}

\begin{center}
\begin{figure}[h!]
  \begin{tikzpicture}[scale=0.6]
\foreach \a in {1,2,...,12}{
\filldraw (\a*360/12: 3cm) circle (0.09cm);
};
\foreach \a in {1,2,...,12}{
\draw [thick] (\a*360/12: 3cm) --  (\a*360/12 + 360/12: 3cm);
};
\filldraw [thick] (1, 1) circle (0.09cm);
\filldraw [thick] (-1,1) circle (0.09cm);
\filldraw [thick] (1, -1) circle (0.09cm);
\filldraw [thick] (-1,-1) circle (0.09cm);
\draw [thick] (12*360/12: 3cm) -- (-1,-1) -- (1,1) --  (2*360/12: 3cm);
\draw [thick] (1,1) --  (6*360/12: 3cm);
\draw [thick] (1,1) --  (10*360/12: 3cm);
\draw [thick] (1,-1) --  (3*360/12: 3cm);
\draw [thick] (1,-1) --  (7*360/12: 3cm);
\draw [thick] (1,-1) --  (11*360/12: 3cm);
\draw [thick] (-1,-1) --  (4*360/12: 3cm);
\draw [thick] (-1,-1) --  (8*360/12: 3cm);
\draw [thick] (-1,-1) --  (12*360/12: 3cm);
\draw [thick] (-1,1) --  (1*360/12: 3cm);
\draw [thick] (-1,1) --  (5*360/12: 3cm);
\draw [thick] (-1,1) --  (7*360/12: 3cm);
\draw [thick] (-1,1) -- (1,-1);
\filldraw [thick] (-6, -1.8) circle (0.09cm);
\filldraw [thick] (6, -3) circle (0.09cm);
\filldraw [thick] (1,6) circle (0.09cm);
\draw [thick] (-6, -1.8)  to[out=30,in=180] (12*360/12: 3cm);
\filldraw [thick] (-6, -1.8) -- (3*360/12: 3cm);
\draw [thick] (-6, -1.8)-- (6*360/12: 3cm);
\draw [thick] (-6, -1.8)  to[out=330,in=180] (9*360/12: 3cm);
\filldraw [thick] (6, -3) -- (1*360/12: 3cm);
\filldraw [thick] (6, -3) -- (4*360/12: 3cm);
\filldraw [thick] (6, -3) -- (7*360/12: 3cm);
\filldraw [thick] (6, -3) -- (11*360/12: 3cm);
\filldraw [thick] (1, 6) -- (2*360/12: 3cm);
\filldraw [thick] (1, 6) -- (5*360/12: 3cm);
\draw [thick] (1, 6)  to[out=245,in=80] (8*360/12: 3cm);
\draw [thick] (1, 6)  to[out=300,in=100] (11*360/12: 3cm);
\draw [ thick]  (6,-3) circle (0.4cm);
\draw [ thick]  (-6,-1.8) circle (0.4cm);
\draw [ thick]  (1,6) circle (0.4cm);
   \end{tikzpicture}
\caption{The Robertson Graph on 20 vertices: a subset $W$ of 3 vertices integrates the first 10 eigenfunctions, all those satisfying $|\lambda_k - 1| \geq 0.44$, exactly. The Theorem implies that any such configuration requires the 2-neighborhood of $W$ to have at least 8 vertices.}
\end{figure}
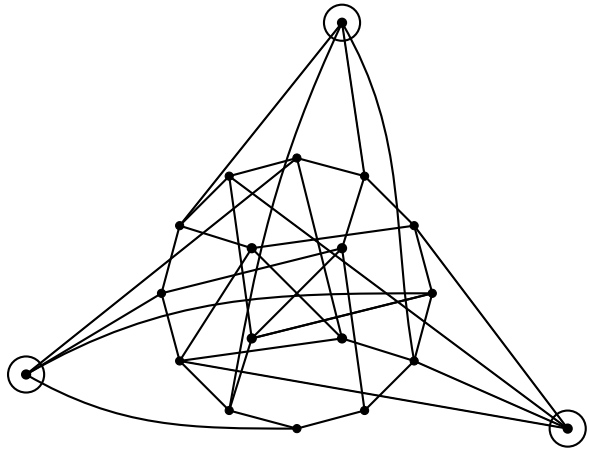
\end{center}

We quickly comment on the last problem and note that a slightly weaker statement is trivial.

\begin{proposition} If $ L = AD^{-1} - \mbox{Id}_{n \times n}$ has an orthogonal set of $n$ eigenvectors, then for every $1 \leq k \leq n$ there exists $W \subset V$ having $|W| = k$ and weights
(not necessarily positive) $a_w$ such that
$$ \sum_{w \in W}{ a_w \phi_{\ell}(w)} = \frac{1}{|V|} \sum_{v \in V}{  \phi_{\ell}(v)}$$
holds for at least $k$ different eigenfunctions (not necessarily the first $k$).
\end{proposition}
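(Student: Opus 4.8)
The plan is to recast the statement as an elementary fact about submatrices of an invertible matrix. Collect the eigenvector values into the matrix $\Phi \in \mathbb{R}^{n \times n}$ with entries $\Phi_{\ell v} = \phi_\ell(v)$, so that the rows of $\Phi$ are the eigenvectors, and write $c_\ell = \frac{1}{|V|}\sum_{v \in V}\phi_\ell(v)$ for the target values on the right-hand side. The hypothesis that $L$ admits an orthogonal set of $n$ eigenvectors guarantees that these $n$ vectors are linearly independent (orthogonal nonzero vectors always are), so $\Phi$ is invertible. For a fixed choice of $k$ vertices $W = \{w_1, \dots, w_k\}$, the requirement that the quadrature rule be exact on a fixed eigenfunction $\phi_\ell$ is precisely the single linear equation $\sum_{i=1}^{k} a_{w_i}\,\phi_\ell(w_i) = c_\ell$ in the $k$ unknown weights $a_{w_1},\dots,a_{w_k}$.

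First I would choose the vertices and the eigenfunctions \emph{simultaneously} so as to make the coefficient matrix nonsingular. Since $\Phi$ has rank $n \ge k$, it contains a $k \times k$ invertible submatrix, which I would exhibit as follows: pick $k$ linearly independent columns of $\Phi$ (these index the vertex set $W$, automatically giving $k$ distinct vertices), obtaining an $n \times k$ block of rank $k$; then pick $k$ linearly independent rows of that block (these index the selected eigenfunctions $\phi_{\ell_1}, \dots, \phi_{\ell_k}$). Denote the resulting invertible matrix by $M$, with $M_{ji} = \phi_{\ell_j}(w_i)$.

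Then I would simply solve. The $k$ exactness conditions for $\phi_{\ell_1}, \dots, \phi_{\ell_k}$ read $M a = c'$, where $a = (a_{w_1}, \dots, a_{w_k})^{\top}$ and $c' = (c_{\ell_1}, \dots, c_{\ell_k})^{\top}$; since $M$ is invertible this has the unique solution $a = M^{-1} c'$. With these weights, the set $W$ of cardinality $k$ integrates all $k$ chosen eigenfunctions exactly, which is exactly the asserted conclusion.

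There is essentially no obstacle beyond the rank fact used in the second step, and this is precisely where the deliberate weakness of the statement is exploited: because we never prescribe \emph{which} eigenfunctions must be hit, invertibility of $M$ removes any consistency requirement, so solvability is automatic; for the same reason the weights need not be positive and the selected eigenfunctions need not be the first $k$. The genuinely harder conjecture stated above — that one may always take the \emph{first} $k$ eigenfunctions, and with \emph{positive} weights — is of a completely different nature, since it demands invertibility of a specific $k \times k$ submatrix (with rows the first $k$ eigenfunctions) together with a sign condition on $M^{-1}c'$, and the argument here says nothing about either.
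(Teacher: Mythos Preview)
Your proof is correct and follows the same idea as the paper's: both hinge on the fact that the invertible eigenvector matrix $\Phi$ must possess an invertible $k\times k$ submatrix, whose rows and columns then furnish the desired eigenfunctions, vertices, and (by solving the resulting linear system) weights. The only cosmetic difference is that the paper argues by contradiction---if no such set existed, every $k\times k$ minor would vanish, forcing $\det\Phi=0$ via iterated Laplace expansion---whereas you extract the submatrix directly from the rank.
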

\begin{proof} We denote the $n$ eigenvectors by $\phi_1, \dots, \phi_n$ and the $n$ vertices by $v_1, \dots, v_n$. Let us fix $k$ and assume that no such set exists. We construct the matrix
$ A = (\phi_i(v_j))_{i,j=1}^{n}$ and observe that the nonexistence of such sets implies that all $k \times k$ minors have determinant 0. By an iterated Laplace expansion, we can conclude $\det A = 0$ but orthogonality
of the eigenvectors implies $\det A = 1$.
\end{proof}

\end{document}